\begin{document}

\title*{On Designs for Recursive Least Squares Residuals to Detect Alternatives}

\author{Wolfgang Bischoff}

\institute{Wolfgang Bischoff \at Katholische Universitaet Eichstaett-Ingolstadt, Mathematisch-Geographische Fakultaet, D-85071 Eichstaett, Germany, \email{wolfgang.bischoff@ku.de}}

\maketitle

\abstract{
Linear regression models are checked by a lack-of-fit (LOF)
test to be sure that the model is at least approximatively true. In many practical cases data are sampled sequentially. Such a situation appears in industrial production when goods are produced one after the other. So it is of some interest to check the regression model sequentially. This can be done by recursive least squares residuals. A sequential LOF test can be based on the recursive residual partial sum process. In this paper we state the limit of the partial sum process of a triangular array of  recursive residuals given a constant regression model when the number of observations goes to infinity. Furthermore, we state the corresponding limit process for local alternatives. For specific alternatives  designs are determined dominating other designs in respect of power of the sequential LOF test described above. In this context a result is given in which $e^{-1}$ plays a crucial role.}

\section{Introduction}
\label{sec:1}
In order to guarantee the quality of each delivery of contract goods companies take samples to decide whether the quality is or is not constant. If the goods are sequentially produced this problem can be modelled by the regression model
\begin{equation}
Y(t) = g(t) + \epsilon (t), \quad t \in [0, 1],
\label{regression model}
\end{equation}
where $g$ is the true, but unknown mean function of the quality, $ \epsilon (t)$ is a real
random variable with expectation $0$ and variance $\sigma^2>0$, and $[0,1]$ is the period of production.
Since our results keep true when $\sigma^2$ is replaced by a consistent estimator for $\sigma^2$ we can put $\sigma^2=1$ without loss of generality.

We consider the problem more generally.  We like to test if the model (\ref{regression model})  is a linear model with respect to known and linearly independent functions $f_1,\dots,f_d,$ i.e. if there exist suitable constants $\beta_1,\dots,\beta_d\in \R$ such
that $g(t)=\sum_{i=1}^d \beta_i f_i(t), t\in [0,1]$. Hence, we look for a test of the null hypothesis
\begin{equation}
H_0: g=\sum_{i=1}^d \beta_i f_i=f^\top\beta \mbox{ for some }  \beta=(\beta_1,\ldots,\beta_d)^\top\in \R^d,
\label{H_0 allg}
\end{equation}
where $f^\top=(f_1,\dots,f_d)$, against the alternative  that model (\ref{regression model}) is not a linear model with respect to $f_1,\dots,f_d,$ that is (\ref{H_0 allg}) is not fulfilled.

In quality control we are interested in testing
\begin{eqnarray}
H_0: g(t)= \beta  ={\bf 1}_{[0,1]}(t)\beta , \quad t \in [0,1],
\mbox{ $\beta\in \R$ unknown constant},
\label{H_0 data}
\end{eqnarray}
where ${\bf 1}_{[0,1]}$ is the function identical 1 on $[0,1]$, against the alternative
\begin{equation}
K: g\neq constant.
\label{K data}
\end{equation}
Such a function g under the alternative has typically the following form:
\begin{eqnarray*}
g(t)= \beta \mbox{ for } t\in [0,t_0],\;\;
g \mbox{ increasing or decreasing for } t\in (t_0,1].
\label{K specific}
\end{eqnarray*}
This form of $g$ means that the quality keeps constant up to a fixed, known or unknown change--point $t_0\in (0,1)$, then the quality is getting worse or is getting better.

In the literature on `detecting changes' in regression models, it is
common to consider (recursive) residual partial sum processes or variants thereof; see for instance,
Gardner \cite{G69}, Brown, Durbin and Evans \cite{BDE75}, Sen and Srivastava \cite{SS75}, MacNeill \cite{M78} and \cite{M78/2}, Sen \cite{S82},
Jandhyala and MacNeill \cite{MJ89}, \cite{MJ91} and \cite{MJ97}, Watson \cite{W95},
Bischoff \cite{B98}, Jandhyala, Zacks and El-Shaarawi \cite{JZES99},
Bischoff and Miller \cite{BM00}, Xie and MacNeill \cite{XM06}, Bischoff and Somayasa \cite{BS09}, Bischoff and Gegg \cite{BG11}.
The asymptotics of the partial sum of recursive residuals is investigated by Sen \cite{S82} only. Sen \cite{S82}, however, assumed a time series sampling for his asymptotic result. For our problem we need a triangular array approach.

In Section  \ref{Sect 2} we discuss some asymptotic results for the partial sum process of recursive least squares residuals. Assuming a constant regression model we state such a result for a triangular array of design points on one hand if the null hypothesis (\ref{H_0 data}) is true and on the other hand under certain assumptions if a local alternative (\ref{K data}) is true. With this we are in the position to establish an asymptotic size $\alpha$ test to test the null hypothesis  (\ref{H_0 data}). Furthermore, in Section \ref{Sect 3}, we can discuss the power of this test for certain alternatives. There, we determine designs that have uniformly more power than other designs. For one of the results $e^{-1}$ occurs as crucial constant.


\section{\bf Recursive Residuals} \label{Sect 2}

Recursive (least squares) residuals were described in Brown et al. \cite{BDE75}, for some history  see Farebrother \cite{F78}. Brown et al. considered recursive residuals for a linear regression model given a time series sampling. To this end let $t_1<t_2<\ldots$ be a sequence of (design) points (in time), let $\varepsilon_1,\varepsilon_2, \dots$ be iid real random variables with $E(\varepsilon_i)=0$ and $Var(\varepsilon_i)=1$, let $n\in \N, n>d,$ be the number of observations where $d$ is the number of known regression functions. Moreover, we put
\begin{eqnarray*}
X_{i}=(f(t_{1}),\dots,f(t_{i}))^\top, \;d\leq i\leq n,
\label{NotaionLMrec_time series}
\end{eqnarray*}
where $f=(f_1,\ldots,f_d)^\top$. So we get $n-d+1$ linear models, namely for the first $i$, $d\leq i\leq n$, observations each
$$
{\vec Y}_i= (Y_{1},\dots,Y_{i})^\top=X_{i}\vec\beta+(\epsilon_{1},\dots,\epsilon_{i})^\top,\; d\leq i\leq n.
$$
Let $t_1,\ldots,t_d$ be chosen in such a way that $rank(X_d)=d$. Then, for each $i\geq d$  we estimate $\beta$ by the least squares estimate $\hat \beta_{i}$ using the first $i$ observations ${\vec Y}_i$. Now we can define $n-d$ recursive residuals
$$
e_{i}=\frac{Y_{i}-f(t_{i})^\top\hat\beta_{i-1}}{\left(1+f(t_{i})^\top (X_{i-1}^\top X_{i-1})^{-1}f(t_{i})\right)^{1/2}}, \;i=d+1,\ldots,n.
$$
To state Sen's and our result it is convenient to define the partial sum operator
$T_n :\R^n \longrightarrow C[0,1], {\bf a} = (a_1, \ldots, a_n)^\top \mapsto T_n
({\bf a})(z),~~z\in [0,1],$ where
$$
T_n ({\bf a}) (z) = \sum^{[nz]}_{i=1} a_i + (nz- [nz]) a_{[nz]+1},~z\in [0,1].
$$
Here we used $[s] = \max \{ n \in\N_0 \mid n\leq s\}$ and $\sum^0_{i=1} a_i=0$.
Let ${\bf a}=(a_1,\dots,a_n)^\top\in \R^n, b_i=a_1+\ldots+a_i, i=1,\ldots,n,$ then the function $T_n({\bf a})(\cdot)$ is shown in Figure 2.
\unitlength8.5mm
\begin{center}
\begin{picture}(10,3.5)
\put(0,0){\line(1,0){10}}
\put(0,-0.7){$0$}

\put(0,-0.2){\line(0,1){0.4}}
\put(1,-0.15){\line(0,1){0.3}}

\put(.8,-0.7){$1/n$}

\put(2,-0.15){\line(0,1){0.3}}

\put(1.8,-0.7){$2/n$}

\put(3,-0.15){\line(0,1){0.3}}

\put(2.8,-0.7){$3/n$}

\put(4,-0.15){\line(0,1){0.3}}
\put(5,-0.15){\line(0,1){0.3}}
\put(6,-0.15){\line(0,1){0.3}}
\put(7,-0.15){\line(0,1){0.3}}
\put(8,-0.15){\line(0,1){0.3}}
\put(9,-0.15){\line(0,1){0.3}}
\put(10,-0.2){\line(0,1){0.4}}

\put(9.9,-0.7){$1$}

\put(.01,.02){\line(1,1){1}}
\put(.65,1.04){\line(1$b_1$,2){1}}
\put(1.65,3.06){\line(1$b_2$,-1){1}}
\put(3.2,2.1){\line(0$b_3$,0){0}}
\put(3.8,1.2){\line(0$.~~ . ~~. ~~. ~~. ~~.$,0){0}}

\put(8.2,1.6){\line(0$b_{n-1}$,0){0}}  \put(8.95,1.8){\line(1,-1){1}}
\put(9.5,.6){\line(0$b_n$,0){0}}

\put(3,-1.5){{\footnotesize{\bf Fig 2.} The function $T_n({\bf a})(\cdot)$.}}
\end{picture}
\end{center}

\setcounter{figure}{2}

\vspace*{13mm}

\noindent  By Donsker's Theorem the stochastic process $\frac{1}{\sqrt{n}}T_n((\mbox{\boldmath$\varepsilon_1,\ldots,\epsilon$}_n)^\top)$  converges weakly
to Brownian motion $B$ for $n \to \infty $.
For recursive residuals Sen \cite{S82} proved the following result.

\begin{theorem} [Sen \cite{S82}]
For the regression model given in (\ref{H_0 allg}) let $\vec{e}_n=(e_{d+1},\ldots,e_n)^\top$. If $H_0$ given in (\ref{H_0 allg}) is true, then under certain assumptions it holds true
$$
\frac{1}{\sqrt{n-d}}T_{n-d}(\vec{e}_n)(z) \mbox{ converges weakly to } B(z),\; z \in[0,1], \mbox{ for } n\to \infty.
$$
\end{theorem}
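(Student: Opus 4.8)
The plan is to verify that under $H_0$ the recursive residuals are uncorrelated with mean $0$ and variance $1$, and then to establish weak convergence of the rescaled partial--sum process to $B$ by the two classical steps: convergence of the finite--dimensional distributions and tightness. The one point where care is needed is that, although uncorrelated, the $e_i$ are \emph{not} a martingale difference sequence with respect to the natural filtration of the errors, so Donsker's theorem cannot be applied to them directly.

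First I would fix the moment structure. Under $H_0$ we have $Y_i=f(t_i)^\top\beta+\varepsilon_i$, and since $\hat\beta_{i-1}$ is unbiased, $\hat\beta_{i-1}-\beta=(X_{i-1}^\top X_{i-1})^{-1}X_{i-1}^\top(\varepsilon_1,\dots,\varepsilon_{i-1})^\top$; hence the numerator of $e_i$ equals $\varepsilon_i-f(t_i)^\top(\hat\beta_{i-1}-\beta)$, which has mean $0$ and, by independence of $\varepsilon_i$ from the past, variance $1+f(t_i)^\top(X_{i-1}^\top X_{i-1})^{-1}f(t_i)$ --- exactly the square of the normalising denominator, so $E(e_i)=0$ and $\mathrm{Var}(e_i)=1$. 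Uncorrelatedness is the classical Brown--Durbin--Evans property \cite{BDE75}: the recursive construction is a sequential Gram--Schmidt orthogonalisation, so one may write $\vec e_n=C_n(\varepsilon_1,\dots,\varepsilon_n)^\top$ with orthonormal rows, $C_nC_n^\top=I_{n-d}$ and $C_nX_n=0$, whence $\mathrm{Cov}(e_i,e_k)=\delta_{ik}$. In particular each $e_i=\sum_{j\le i}c_{ij}\varepsilon_j$ is a linear form in the iid errors, and the whole partial--sum process is a fixed linear image of $(\varepsilon_1,\dots,\varepsilon_n)$.

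For the finite--dimensional distributions I would fix $0\le z_1<\dots<z_r\le1$ and apply the Cram\'er--Wold device: every linear combination of the increments $\frac{1}{\sqrt{n-d}}\sum_{i\in I_l}e_i$ over the index blocks $I_l$ determined by the $z_l$ is again a single linear form $\sum_j a_{nj}\varepsilon_j$ in the iid errors. For such forms the Lindeberg condition reduces to the negligibility $\max_j a_{nj}^2/\sum_j a_{nj}^2\to0$; Sen's design--regularity assumptions (convergence of the normalised information matrix $\tfrac1i X_i^\top X_i$ to a positive definite limit) keep the coefficients $c_{ij}$ small enough that this holds, while the uncorrelated unit--variance structure makes the total variance grow like $n-d$. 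Hence each linear combination is asymptotically normal, the increments are asymptotically uncorrelated with variances $z_l-z_{l-1}$, and joint normality upgrades this to independence --- exactly the finite--dimensional distributions of Brownian motion.

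The main obstacle is tightness, together with the uniform control of the coefficient array that the Lindeberg step needs: because the estimation corrections accumulate there is genuine cancellation (one checks that the rescaled correction is $O_P(1)$, not negligible), so one cannot simply compare $e_i$ with $\varepsilon_i$. I would instead exploit uncorrelatedness to get the exact increment variance $E\big(\frac{1}{\sqrt{n-d}}\sum_{i\in I}e_i\big)^2=|I|/(n-d)$ and, under a fourth--moment assumption on the errors (part of Sen's hypotheses), upgrade this to a fourth--moment increment bound of Billingsley type, which yields tightness in $C[0,1]$; the linear interpolation term in $T_{n-d}$ contributes a single summand of order $(n-d)^{-1/2}$ and is asymptotically irrelevant. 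Convergence of the finite--dimensional distributions together with tightness then gives $\frac{1}{\sqrt{n-d}}T_{n-d}(\vec e_n)\Rightarrow B$. The heart of the argument, and the reason for the ``certain assumptions'', is precisely the uniform smallness of the coefficients $c_{ij}$ in the orthogonal representation, which the design regularity guarantees.
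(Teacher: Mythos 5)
The paper does not prove this statement: Theorem~1 is quoted from Sen \cite{S82} precisely because the author needs it only as a point of departure (the triangular-array version is then taken from \cite{R03}), so there is no in-paper argument to compare yours against. Judged on its own, your strategy is sound and essentially complete as an outline. Its load-bearing element is the Brown--Durbin--Evans orthonormality $\vec e_n=C_n(\varepsilon_1,\dots,\varepsilon_n)^\top$ with $C_nC_n^\top=I_{n-d}$ and $C_nX_n=0$, and you use it for exactly the three things it is good for: (i) the increment covariances are \emph{exactly} those of Brownian motion at the discrete level, so no asymptotic covariance computation is needed; (ii) writing $\sum_{i\in I}e_i=\sum_j b_j\varepsilon_j$ gives $\sum_j b_j^2=|I|$, whence $E\bigl(\sum_{i\in I}e_i\bigr)^4\le\max(3,E\varepsilon_1^4)\,|I|^2$ and the Billingsley moment criterion for tightness; (iii) the Cram\'er--Wold/Lindeberg step for the finite-dimensional distributions. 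Your warning that the $e_i$ are not martingale differences with respect to $\sigma(\varepsilon_1,\dots,\varepsilon_{i-1})$, and that the accumulated estimation correction is $O_P(1)$ rather than negligible after rescaling, is correct and worth making explicit, since it blocks the two most tempting shortcuts (direct Donsker, or comparison with $\sum\varepsilon_i$). Two points should be stated rather than gestured at if this were written out in full: the Lindeberg negligibility $\max_j a_{nj}^2/\sum_j a_{nj}^2\to0$ does require verifying $|c_{ij}|=O(1/i)$ for $j<i$ from the assumed convergence of $\tfrac1iX_i^\top X_i$ together with bounded regressors (the coefficient of an early $\varepsilon_j$ in the partial sum grows like $\log n$, which is still $o(\sqrt n)$); and your tightness route needs $E\varepsilon_1^4<\infty$, which is a legitimate reading of ``under certain assumptions'' but is a possibly stronger moment condition than Sen's own argument requires, the price you pay for an elementary, purely covariance-based proof.
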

Sen, however, could not determine the limit process for a local alternative.

The time series sampling approach described above cannot be applied to problems of experimental design. Instead we need the asymptotic result for a triangular array of design points under the null hypothesis and under local alternatives.
To this end let $n_0 \in \N$, $n_0>d$, be the number of observations. We assume that the data are taken at the design points $0\leq t_{n_01}\leq t_{n_02}\leq \ldots \leq t_{n_0n_0}\leq 1.$ These design points can be embedded in a triangular array of design points: $0\leq t_{n1}\leq t_{n2}\leq \ldots \leq t_{nn}\leq 1,\;n \in\N.$ Furthermore, let $\varepsilon_{n1},\dots,\varepsilon_{nn}, n\in \N,$ be a triangular array of real random variables where $\varepsilon_{n1},\dots,\varepsilon_{nn}$ are iid with  $\E(\varepsilon_{ni})=0$ and $Var(\varepsilon_{ni})=1$ for each $n\in \N$.
Accordingly, we get a corresponding triangular array of observations for model (\ref{regression model}):
$$
Y_{nj} =g (t_{nj}) + \epsilon_{nj}, ~1 \leq j \leq n,~n \in \N.
$$
We put
\begin{eqnarray*}
\varepsilon_i^{n}=(\varepsilon_{n1},\dots,\varepsilon_{ni})^\top, \;d\leq i\leq n,\\
X^n_{i}=(f(t_{n1}),\dots,f(t_{ni}))^\top, \;d\leq i\leq n.
\end{eqnarray*}
So we get $n-d+1$ linear models under the null hypothesis $H_0$ given in (\ref{H_0 allg}), namely for the first $i$, $d\leq i\leq n$, observations each
$$
{\bf Y}^n_i= (Y_{n1},\dots,Y_{ni})^\top=X^n_{i}\beta+\varepsilon_i^{n}.
$$
Let $t_{n1},\ldots,t_{nd}$ for all $n\geq d$ be chosen in such a way that $rank(X^n_{d})=d$. Moreover, let  $\hat \beta_{i}^n, d\leq i,$ be the least squares estimate for $\beta$ using the first $i$ observations ${\bf Y}^n_i$. Then the $n-d$ recursive least squares residuals for the triangular array are defined by
$$
e_{ni}=\frac{Y_{ni}-f(t_{ni})^\top\hat\beta_{i-1}^n}{\left(1+f(t_{ni})^\top (X_{ni-1}^\top X_{ni-1})^{-1}f(t_{ni})\right)^{1/2}},\;i=d+1,\dots,n.
$$
Assuming the constant regression model the next result states the limit of the recursive residual partial sum process if $H_0$, see (\ref{H_0 data}), is true and if a local alternative is true. In case (\ref{H_0 data}) is true, the location of the design points has no influence. So the result is true for any  triangular array of design points. If a local alternative is true, we give the result for a uniform array of design points only to avoid further technical notation. This result will be sufficient for our purposes below.
\begin{theorem} [Master Thesis Rabovski \cite{R03} under the supervision of the author and Frank Miller]
For the constant regression model let $\vec{e}^n=(e_{nd+1},\ldots,e_{nn})^\top$ be the vector of the $n-d$ recursive residuals of a triangular array of design points.
\begin{itemize}
\item[a)] If $H_0$ given in  (\ref{H_0 data}) is true, then for any triangular array of design points
$$
\frac{1}{\sqrt{n-d}}T_{n-d}(\vec{e}^n)(z) \mbox{ converges weakly to } B(z),\; z \in[0,1], \mbox{ for } n\to \infty.
$$
\item[b)] Let $g:[0,1]\to\R, g\neq constant$, have bounded variation and let the triangular array of design points be given by a uniform design
$$
t_{n1}=0,t_{n2}=\frac{1}{n-1},t_{n3}=\frac{2}{n-1},\ldots,t_{nn}=1, n\in \N.
$$
Then, if the local alternative $\frac{1}{\sqrt {n-d}}g$ is true,
$$
\frac{1}{\sqrt{n-d}}T_{n-d}(\vec{e}^n)(z) \mbox{ converges weakly to }  h(z)  +B(z)
 ,\; z \in[0,1], \mbox{ for } n\to \infty,
$$
where
$$
h(z)=\int_0^z g(t) dt-\int_0^z \frac{1}{s} \int_0^s g(t) dt ds, \;z\in [0,1].
$$
\end{itemize}
\label{asympt constant regr}
\end{theorem}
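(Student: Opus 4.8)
The plan is to exploit the fact that for the constant regression model we have $d=1$ and $f\equiv \mathbf{1}_{[0,1]}$, so that $X_{ni-1}^\top X_{ni-1}=i-1$, the estimate $\hat\beta_{i-1}^n$ is the running sample mean, and the recursive residual collapses to the explicit form
$$
e_{ni}=\sqrt{\tfrac{i-1}{i}}\,\Big(Y_{ni}-\tfrac{1}{i-1}\textstyle\sum_{j=1}^{i-1}Y_{nj}\Big),\qquad i=2,\dots,n.
$$
Writing $Y_{ni}=\mu_{ni}+\varepsilon_{ni}$ with $\mu_{ni}$ the mean and using linearity, I would split each residual as $e_{ni}=\bar e_{ni}+\tilde e_{ni}$, where $\bar e_{ni}$ arises by replacing $Y$ by the deterministic means $\mu$, and $\tilde e_{ni}$ by replacing $Y$ by the errors $\varepsilon$. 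Under $H_0$ the mean is constant, so $\mu_{ni}-\tfrac{1}{i-1}\sum_{j<i}\mu_{nj}=0$ and hence $\bar e_{ni}=0$; this already explains why in part a) the position of the design points is irrelevant, and it reduces both parts to a single stochastic term plus (in b) a deterministic drift.

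For the stochastic term I would first record that $\tilde e_{ni}=\sqrt{(i-1)/i}\,(\varepsilon_{ni}-\tfrac{1}{i-1}\sum_{j<i}\varepsilon_{nj})$ are, for each fixed $n$, mean zero, of variance one, and mutually uncorrelated -- the Helmert-type orthogonal transform underlying recursive residuals, verified by a short covariance computation. Consequently the covariance of $\tfrac{1}{\sqrt{n-1}}T_{n-1}(\tilde{\mathbf{e}}^n)$ at arguments $s\le t$ equals $\tfrac{[(n-1)s]}{n-1}\to s$, i.e.\ exactly that of Brownian motion. To upgrade this to weak convergence I would invoke a functional central limit theorem for the triangular array $(\tilde e_{ni})$: the finite-dimensional distributions converge to the Gaussian limit by the Lindeberg central limit theorem applied to the increments written as linear combinations of the iid errors $\varepsilon_{nj}$ (the weights are asymptotically negligible, so the Lindeberg condition reduces to the finite second moment of the errors), while tightness follows from a moment bound on the increments of the partial-sum process. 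Under Gaussian errors this step is immediate, since the $\tilde e_{ni}$ are then iid $N(0,1)$ and Donsker's theorem applies directly. This settles part a) completely and produces the $B(z)$ summand in part b).

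It remains to identify the drift in part b). Inserting the local alternative $\mu_{ni}=\tfrac{1}{\sqrt{n-1}}g(t_{ni})$ into $\bar e_{ni}$, the external normalisation $\tfrac{1}{\sqrt{n-1}}$ of the partial-sum operator combines with the $\tfrac{1}{\sqrt{n-1}}$ of the alternative into a single factor $\tfrac{1}{n-1}$, which I would read as the mesh of the uniform design $t_{ni}=\tfrac{i-1}{n-1}$. Thus $\tfrac{1}{\sqrt{n-1}}\sum_{i=2}^{[(n-1)z]+1}\bar e_{ni}$ becomes a double Riemann sum. With $s=\tfrac{i-1}{n-1}$ the inner average $\tfrac{1}{i-1}\sum_{j=1}^{i-1}g(t_{nj})$ converges to $\tfrac1s\int_0^s g(t)\,dt$ and the outer sum to $\int_0^z\big(g(s)-\tfrac1s\int_0^s g(t)\,dt\big)\,ds=h(z)$; the bounded variation of $g$ is precisely what makes these Riemann-sum approximations converge uniformly in $z\in[0,1]$, and it also keeps $\tfrac1s\int_0^s g$ bounded so that the apparent singularity at $s=0$ is harmless. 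Combining the two parts by Slutsky's theorem -- a deterministic sequence converging in $C[0,1]$ to $h$ added to a sequence converging weakly to $B$ -- yields $\tfrac{1}{\sqrt{n-1}}T_{n-1}(\vec e^n)\Rightarrow h+B$.

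The main obstacle I anticipate is the functional central limit theorem for the stochastic term: the recursive residuals are neither independent (in the non-Gaussian case) nor a martingale difference sequence with respect to the natural error filtration, so the convergence cannot simply be read off from Donsker's theorem and instead requires establishing finite-dimensional convergence (via Lindeberg) and tightness for the triangular array separately. A secondary technical point is securing the uniformity in $z$ of the drift approximation near $s=0$, where the bounded variation of $g$ must be used to control both the Riemann-sum error and the factor $1/s$.
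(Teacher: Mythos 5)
The paper itself gives no proof of this theorem: it is quoted from Rabovski's Diplomarbeit \cite{R03}, and Section \ref{Sect 4} only proves Theorems \ref{theorem 4} and \ref{theorem 5}. So your proposal can only be measured against the standard argument one would expect that thesis to contain, and in structure you have reproduced it: for $d=1$, $f\equiv{\bf 1}_{[0,1]}$ the recursive residual is $e_{ni}=\sqrt{(i-1)/i}\,(Y_{ni}-\bar Y_{i-1})$, the Helmert-type decomposition into a deterministic drift part and a mean-zero, unit-variance, uncorrelated stochastic array is exactly the right device, the vanishing of the drift under $H_0$ correctly explains the design-independence in part a), and your Riemann-sum identification of $h$ in part b) (with bounded variation controlling both the approximation and the $1/s$ factor near $0$) is sound.

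The genuine gap is in the functional central limit theorem for the stochastic term $\tilde e_{ni}$, which you yourself flag as the main obstacle but then dispose of too quickly. Finite-dimensional convergence via Lindeberg applied to the representation $S_k=\sum_{j\le k}a_{jk}\varepsilon_{nj}$ does work (the weights satisfy $\max_j a_{jk}^2/\sum_j a_{jk}^2\to 0$, so Lindeberg reduces to the finite second moment). But "tightness follows from a moment bound on the increments" is not available under the stated hypotheses: a bound of the form $E[(S_l-S_k)^4]\leq C(l-k)^2$ requires $E[\varepsilon_{ni}^4]<\infty$, which is not assumed, and with only second moments the usual tightness routes fail here because the increments of $S$ over disjoint blocks are neither independent nor martingale differences with respect to the error filtration (indeed $E[\tilde e_{n,k+1}\mid \varepsilon_{n1},\dots,\varepsilon_{nk}]=-\sqrt{k/(k+1)}\,\bar\varepsilon_{nk}\neq 0$), so neither the Ottaviani/L\'evy inequalities nor the martingale FCLT apply off the shelf. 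Note also that $S_k$ is \emph{uncorrelated} with $\sum_{j\le k}\varepsilon_{nj}$, so you cannot reduce to Donsker's theorem for the raw errors by showing the difference is uniformly negligible. To close the argument you must either add a fourth-moment (or Gaussian) assumption, or carry out a truncation argument combined with a maximal inequality tailored to the explicit weighted-sum representation of $S_k$, in the spirit of Sen \cite{S82}; as written, the tightness step is asserted rather than proved.
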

Theorem \ref{asympt constant regr} part a) can be used to establish an asymptotic size $\alpha$ test of Kolmogorov(-Smirnov) or Cram\'{e}r-von Mises type. As an example we state a one-sided test of Kolmogorov type, to detect a negative deviation $h$ from Brownian motion.
\begin{theorem}
For the constant regression model let $\vec{e}^n=(e_{nd+1},\ldots,$ $e_{nn})^\top$ be the vector of the $n-d$ recursive residuals of an arbitrary triangular array of design points. Then an asymptotic size $\alpha$ test is given by
$$
\mbox{reject } H_0 \mbox{ given in (\ref{H_0 data}) }\;\Longleftrightarrow \exists t \in[0,1]: \frac{1}{\sqrt{n-d}}T_{n-d}(\vec{e}^n)(t)<\Phi^{-1}(\frac{\alpha}{2}),
$$
where $\Phi^{-1}(\frac{\alpha}{2})$ is the $\alpha/2$ quantile of the standard normal distribution.
\label{Kolmogorov test constant regr}
\end{theorem}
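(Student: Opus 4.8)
The plan is to recognize the rejection region as an event about the infimum of the recursive residual partial sum process and then transport the weak limit from Theorem~\ref{asympt constant regr}~a) through the continuous mapping theorem. Writing $X_n(t)=\frac{1}{\sqrt{n-d}}T_{n-d}(\vec{e}^n)(t)$, I first note that $X_n\in C[0,1]$, since the partial sum operator produces a piecewise linear, hence continuous, function. Consequently the rejection event $\{\exists\,t\in[0,1]:X_n(t)<\Phi^{-1}(\alpha/2)\}$ coincides with $\{\inf_{t\in[0,1]}X_n(t)<\Phi^{-1}(\alpha/2)\}$, because a continuous function on the compact interval $[0,1]$ falls below a level somewhere exactly when its infimum does. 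Thus the size of the test equals $P_{H_0}\!\left(\inf_t X_n(t)<\Phi^{-1}(\alpha/2)\right)$.

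Next I would apply the continuous mapping theorem. The infimum functional $\phi:C[0,1]\to\R$, $\phi(x)=\inf_{t\in[0,1]}x(t)$, is $1$-Lipschitz with respect to the supremum norm and therefore continuous. Since Theorem~\ref{asympt constant regr}~a) gives $X_n\Rightarrow B$ weakly in $C[0,1]$ under $H_0$ for an arbitrary triangular array, the continuous mapping theorem yields $\inf_t X_n(t)\Rightarrow\inf_{t\in[0,1]}B(t)$ as real random variables.

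It then remains to identify the limit law, which is the analytic heart of the argument. By the reflection principle together with the symmetry $B\stackrel{d}{=}-B$, for every level $c\leq 0$ one has $P\!\left(\inf_{t\in[0,1]}B(t)\leq c\right)=P\!\left(\sup_{t\in[0,1]}(-B(t))\geq -c\right)=2\,P(B(1)\leq c)=2\Phi(c)$. In particular this distribution is continuous, being supported on $(-\infty,0]$ with distribution function $c\mapsto 2\Phi(c)$ there, so the threshold $c_\alpha:=\Phi^{-1}(\alpha/2)<0$ is a continuity point and $P(\inf_t B(t)=c_\alpha)=0$. Evaluating gives $P(\inf_t B(t)\leq c_\alpha)=2\Phi(\Phi^{-1}(\alpha/2))=\alpha$; note that the factor $2$ in the quantile $\Phi^{-1}(\alpha/2)$ is precisely what compensates the factor $2$ produced by the reflection principle.

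Finally I would reconcile the strict inequality appearing in the test with the weak inequality in the limit law. Because the boundary $\{c_\alpha\}$ of the half-line $(-\infty,c_\alpha)$ carries no mass under the continuous Brownian-infimum law, the Portmanteau theorem gives $P_{H_0}\!\left(\inf_t X_n(t)<c_\alpha\right)\to P(\inf_t B(t)<c_\alpha)=P(\inf_t B(t)\leq c_\alpha)=\alpha$, which is the asserted asymptotic size. The step requiring the most care is exactly this last reconciliation: the continuous mapping theorem delivers only convergence in distribution, and one must invoke the continuity of the limit law at $c_\alpha$ to pass from convergence of the distribution function at continuity points to convergence of the probability of the \emph{open} rejection event. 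Once the weak convergence of Theorem~\ref{asympt constant regr}~a) is granted, the remaining computations are standard.
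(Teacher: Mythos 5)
Your proposal is correct and follows essentially the same route as the paper, which simply cites the boundary-crossing probability $P(\exists t\in[0,1]: B(t)<\Phi^{-1}(\tfrac{\alpha}{2}))=\alpha$ from Shorack and leaves the continuous-mapping and Portmanteau details implicit. You have merely made explicit what the paper takes for granted: the identification of the rejection event with an infimum event, the reflection-principle computation $P(\inf_{t}B(t)\leq c)=2\Phi(c)$ for $c\leq 0$, and the continuity of the limit law at the threshold.
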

\begin{proof}
Since $P(\exists t \in[0,1]: B(t)<\Phi^{-1}(\frac{\alpha}{2}))=\alpha$, see, for instance, Shorack \cite{S00} p.314, the theorem is proved.
\end{proof}
The above test is not constructed sequentially. The test statistic $ \frac{1}{\sqrt{n-d}}T_{n-d}(\vec{e}^n)(t)$ can be calculated sequentially for each new recursive residual $e_{ni}$ and so the null hypothesis can be rejected as soon as the test statistic is less than $\Phi^{-1}(\frac{\alpha}{2})$.

\section{Designs for Detecting Alternatives}\label{Sect 3}

We look for designs being useful for the quality problem discussed in the introduction. Therefore we consider the constant regression model. Usually in the context of quality control certain properties of the alternative are often known.

We begin with the alternative
\begin{equation}
g_{t_0}(t)=g_{t_0;c_0,c_1}(t)=c_0{\bf 1}_{[0,t_0]}(t)+c_1{\bf 1}_{(t_0,1]}(t),\;t\in [0,1],
\label{altern jump}
\end{equation}
where $c_0,c_1\in\R$ are unknown constants and the change-point $t_0\in (0,1)$ is a known or unknown constant. We assume $c_0>c_1$ to get a negative trend $h$, see Theorem \ref{theorem 4}. (Note that Theorem \ref{Kolmogorov test constant regr} states a test for detecting negative trends $h$.) Let a triangular array of design points be given with
$$
q:=\lim_{n\to\infty}\frac{\mbox{ number of } \{t_{ni}|t_{ni}\leq t_0, 1\leq i\leq n\}}{n}\in(0,1).
$$
We call such a triangular array of design points an asymptotic $q$-design. The proof of the following result is given in the next section.

\begin{theorem}\label{theorem 4}
For a constant regression model let $\vec{e}^n=(e_{nd+1},\ldots,e_{nn})^\top$ be the vector of the $n-d$ recursive residuals of a triangular array of design points being an asymptotic $q$-design. Let the alternative $g_{t_0}$ given in (\ref{altern jump}) be true. Then, we have for the local alternative $\frac{1}{\sqrt {n-d}}g_{t_0}$:
$$
\frac{1}{\sqrt{n-d}}T_{n-d}(\vec{e}^n)(z) \mbox{ converges weakly to }  h(z)  +B(z)
 ,\; z \in[0,1], \mbox{ for } n\to \infty,
$$
where
$$
h(z)=q(c_1-c_0)(\ln(z)-\ln(q)){\bf 1}_{(q,1]}(z), \;z\in [0,1].
$$
\label{Kolmogorov test constant regr altern}
\end{theorem}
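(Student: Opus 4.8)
The plan is to exploit the fact that, for the constant regression model, the recursive residual is an affine-linear functional of the observations, so that the local-alternative process splits into a deterministic drift and a stochastic fluctuation which can be handled separately. Concretely, for $f\equiv\mathbf 1_{[0,1]}$ one has $(X^{n\top}_{i-1}X^n_{i-1})^{-1}=1/(i-1)$ and $\hat\beta^n_{i-1}=\frac{1}{i-1}\sum_{j=1}^{i-1}Y_{nj}$, so that
$$
e_{ni}=\sqrt{\tfrac{i-1}{i}}\Big(Y_{ni}-\tfrac{1}{i-1}\sum_{j=1}^{i-1}Y_{nj}\Big).
$$
Under the local alternative $Y_{nj}=\frac{1}{\sqrt{n-d}}g_{t_0}(t_{nj})+\varepsilon_{nj}$, linearity gives $e_{ni}=e_{ni}^{(\varepsilon)}+e_{ni}^{(g)}$, where $e_{ni}^{(\varepsilon)}$ is built from the noise alone and $e_{ni}^{(g)}$ from the deterministic mean alone; the crucial point making this split exact is that the standardizing denominator depends only on the design, not on the data. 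The noise part $e_{ni}^{(\varepsilon)}$ is precisely the recursive residual one obtains under $H_0$ (the constant $\beta$ cancels in $Y_{ni}-\bar Y_{i-1}$), so by Theorem \ref{asympt constant regr} a) the process $\frac{1}{\sqrt{n-d}}T_{n-d}(\vec e^{\,n,(\varepsilon)})$ converges weakly to $B$ for \emph{any} triangular array, in particular for the given asymptotic $q$-design.

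It then remains to identify the deterministic drift $D_n(z):=\frac{1}{\sqrt{n-d}}T_{n-d}(\vec e^{\,n,(g)})(z)$ and to show $D_n\to h$ uniformly on $[0,1]$. Here I would use the ordering $t_{n1}\le\cdots\le t_{nn}$ together with the fact that $g_{t_0}$ takes only the two values $c_0$ (for $t_{ni}\le t_0$) and $c_1$ (for $t_{ni}>t_0$): writing $m_n:=\#\{i:t_{ni}\le t_0\}$, the $g$-part of the residual vanishes for $i\le m_n$ (current and all past points equal $c_0$), while for $i>m_n$ the running mean equals $\frac{m_nc_0+(i-1-m_n)c_1}{i-1}$, so that
$$
e_{ni}^{(g)}=\frac{1}{\sqrt{n-d}}\sqrt{\tfrac{i-1}{i}}\,\frac{m_n(c_1-c_0)}{i-1},\qquad i>m_n.
$$
This is the key structural observation: because $g_{t_0}$ is two-valued, the drift depends on the design \emph{only} through the count $m_n$, which is exactly why the conclusion holds for every asymptotic $q$-design and not merely for the uniform one covered by Theorem \ref{asympt constant regr} b).

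Summing, for $z$ with $d+[(n-d)z]>m_n$ one gets
$$
D_n(z)=\frac{m_n(c_1-c_0)}{n-d}\sum_{i=m_n+1}^{\,d+[(n-d)z]}\frac{1}{\sqrt{i(i-1)}}+o(1),
$$
and $D_n(z)=0$ otherwise. The asymptotic evaluation then proceeds in three routine steps: replace $1/\sqrt{i(i-1)}$ by $1/i$ at a summably small cost; approximate the resulting harmonic sum by $\ln\big((d+[(n-d)z])/m_n\big)$, which tends to $\ln(z/q)=\ln z-\ln q$ since $d+[(n-d)z]\sim nz$ and $m_n/n\to q$; and use $m_n/(n-d)\to q$ for the prefactor. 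This yields the pointwise limit $q(c_1-c_0)(\ln z-\ln q)\mathbf 1_{(q,1]}(z)=h(z)$, while the interpolation term of $T_{n-d}$ contributes only $O(1/(n-d))$.

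The one point needing genuine care is upgrading this pointwise convergence to \emph{uniform} convergence of $D_n$ to $h$, which is required before the two parts are combined; I expect this to be the main obstacle. Since $c_0>c_1$, both $D_n$ and $h$ are monotone (nonincreasing) in $z$ and $h$ is continuous on the compact interval $[0,1]$, so a P\'olya/Dini-type argument delivers uniformity, the only delicate region being a neighbourhood of the corner $z=q$, where one checks directly that $D_n(z)$ stays small because the sum there has only $O(n(z-q))$ terms. Finally, since $D_n\to h$ uniformly (and deterministically) while $\frac{1}{\sqrt{n-d}}T_{n-d}(\vec e^{\,n,(\varepsilon)})$ converges weakly to $B$, the decomposition $\frac{1}{\sqrt{n-d}}T_{n-d}(\vec e^{\,n})=D_n+\frac{1}{\sqrt{n-d}}T_{n-d}(\vec e^{\,n,(\varepsilon)})$ together with a Slutsky-type argument for weak convergence in $C[0,1]$ yields the claimed limit $h+B$.
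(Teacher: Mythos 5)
Your argument is correct, but it reaches the conclusion by a genuinely different route than the paper. The paper reduces the general case to Theorem~\ref{asympt constant regr}~b): because $g_{t_0}$ takes only the two values $c_0$ and $c_1$, one has $g_{t_0}(t_{ni})=g_{s/n}\bigl(\tfrac{i-1}{n-1}\bigr)$, where $s$ is the number of design points at most $t_0$, so the pair (arbitrary asymptotic $q$-design, alternative $g_{t_0}$) produces the same observations as the pair (uniform design, alternative $g_{s/n}$); the trend is then read off from the integral formula of part~b), namely $h(z)=\int_q^z\bigl(c_1-\tfrac{1}{s}(qc_0+(s-q)c_1)\bigr)\,ds$ for $z>q$. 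You never invoke part~b): you split each recursive residual into its noise part, which is exactly the $H_0$ residual and is covered by part~a) for an arbitrary array, plus a deterministic drift that you sum explicitly, the harmonic sum yielding $\ln z-\ln q$. Both proofs rest on the same structural fact---a two-valued step alternative sees the design only through the count $m_n$ of points at or below $t_0$---but they exploit it differently. The paper's version is shorter and outsources all the analysis to part~b) (proved in the cited thesis, not in the paper); yours is self-contained given part~a), makes the signal-plus-noise decomposition for local alternatives explicit, avoids the small subtlety that $m_n/n$ only tends to $q$ (so the paper is really applying part~b) with a change point that still moves with $n$), and would in principle extend to alternatives that are not two-valued. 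The price is that you must establish uniform convergence of the drift yourself; your P\'olya-type argument does this correctly, since $c_0>c_1$ makes $D_n$ and $h$ nonincreasing and $h$ is continuous on $[0,1]$.
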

For an asymptotic $q$-design the power of the test given in Theorem \ref{Kolmogorov test constant regr} with respect to the alternative (\ref{altern jump}) is given by
\begin{eqnarray*}
P(\exists z\in[0,1]:B(z)-q(c_0-c_1)(\ln(z)-\ln(q)){\bf 1}_{(q,1]}(z)\leq \Phi^{-1}(\frac{\alpha}{2})).
\nonumber
\end{eqnarray*}
Therefore, we call an asymptotic $q^*$-design uniformly better than an asymptotic $q$-design if for all $z\in(0,1]$
\begin{eqnarray}\label{power const-const comparision}
-q^*(\ln(z)-\ln(q^*)){\bf 1}_{(q^*,1]}(z) \leq
-q(\ln(z)-\ln(q)){\bf 1}_{(q,1]}(z) \\
\mbox{with } '<' \mbox{ at least for one } z \in [0,1]. \nonumber
\end{eqnarray}
The proof of the following result is given in the next section.
\begin{theorem}\label{theorem 5}
Let the situation considered in Theorem 4 be given and let $q_1, q_2\in [e^{-1},1)$. Then an asymptotic $q_1$-design is uniformly better than an asymptotic $q_2$-design, if $q_1<q_2$.
\end{theorem}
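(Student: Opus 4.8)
The plan is to reduce the statement to a pointwise comparison of the two limiting trend functions and then to a monotonicity property in the design parameter $q$, in which the threshold $e^{-1}$ enters in a sharp way. First I would abbreviate
$\phi_q(z):=-q(\ln z-\ln q)\,\mathbf{1}_{(q,1]}(z)$, so that the definition (\ref{power const-const comparision}) of ``uniformly better'' reads: the asymptotic $q_1$-design is uniformly better than the asymptotic $q_2$-design if and only if $\phi_{q_1}(z)\le\phi_{q_2}(z)$ for all $z\in(0,1]$, with strict inequality for at least one $z$. Observe that $\phi_q$ vanishes on $(0,q]$ and is strictly negative on $(q,1]$. I would then split the argument according to the position of $z$ relative to $q_1<q_2$.

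For $z\le q_1$ both indicators vanish, so $\phi_{q_1}(z)=\phi_{q_2}(z)=0$. For $q_1<z\le q_2$ we have $\phi_{q_1}(z)=-q_1\ln(z/q_1)<0$ while $\phi_{q_2}(z)=0$, so $\phi_{q_1}(z)<\phi_{q_2}(z)$ on the whole nonempty interval $(q_1,q_2]$; this simultaneously verifies the inequality there and secures the ``strict for at least one $z$'' clause. The only substantial case is $q_2<z\le 1$, where both indicators equal $1$ and the desired inequality $\phi_{q_1}(z)\le\phi_{q_2}(z)$ is equivalent to $q_2\ln(z/q_2)\le q_1\ln(z/q_1)$.

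To settle this case I would fix $z$ and study the auxiliary function $\psi_z(q):=q\ln(z/q)=q(\ln z-\ln q)$ of the variable $q$. A short computation gives $\psi_z'(q)=\ln(z/q)-1$, which is $\le 0$ exactly when $z\le e\,q$. This is the point at which the hypothesis $q\in[e^{-1},1)$ becomes decisive: since $q\ge e^{-1}$ we have $e\,q\ge 1\ge z$, hence $\psi_z'(q)\le 0$ for every admissible $q$ and every $z\in(0,1]$. Therefore $\psi_z$ is non-increasing on $[e^{-1},1)$, and $q_1<q_2$ yields $\psi_z(q_1)\ge\psi_z(q_2)$, i.e. precisely $q_2\ln(z/q_2)\le q_1\ln(z/q_1)$. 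Combining the three cases gives $\phi_{q_1}\le\phi_{q_2}$ on all of $(0,1]$ with strict inequality on $(q_1,q_2]$, which is the assertion.

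I expect the third case to be the main obstacle: the monotonicity of $\psi_z$ is not automatic, and it is exactly the sign of $\psi_z'(q)=\ln(z/q)-1$ together with $z\le 1$ that forces the bound $q\ge e^{-1}$. Outside the range $[e^{-1},1)$ the derivative can change sign, so the clean ordering of the designs would break down; this is the structural reason why $e^{-1}$ arises as the crucial constant.
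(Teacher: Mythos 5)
Your proof is correct. It establishes exactly the pointwise inequality (\ref{power const-const comparision}), handles the trivial intervals $(0,q_1]$ and $(q_1,q_2]$ explicitly (the latter giving the required strict inequality), and the case $z\in(q_2,1]$ is settled by a valid monotonicity argument: $\psi_z'(q)=\ln(z/q)-1\le 0$ whenever $z\le eq$, which holds for all $z\in(0,1]$ once $q\ge e^{-1}$.

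Your route differs from the paper's in how the essential case $z\in(q_2,1]$ is decomposed. You fix $z$ and differentiate in the design parameter $q$, showing $q\mapsto q\ln(z/q)$ is non-increasing on $[e^{-1},1)$ for every $z\le 1$; this gives the comparison at each $z$ in one stroke. The paper instead differentiates in $q$ only at the single anchor point $z=1$ (where the expression becomes $q\ln q$, minimized at $q=e^{-1}$ and strictly increasing on $[e^{-1},1)$), and then propagates the inequality from $z=1$ backwards over $(q_2,1]$ by comparing the $z$-derivatives, $-q_1/z>-q_2/z$. Both arguments locate $e^{-1}$ as the critical constant through the sign change of $\ln q+1$, but yours is the more uniform and arguably more transparent argument, since it makes explicit \emph{why} the hypothesis $q\ge e^{-1}$ is needed for every $z$, not just at $z=1$; the paper's version has the minor advantage that the $z$-derivative comparison $-q_1/z>-q_2/z$ needs no hypothesis on $q$ at all, isolating the role of $e^{-1}$ entirely in the endpoint comparison at $z=1$. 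You also spell out the intervals $(0,q_1]$ and $(q_1,q_2]$, which the paper leaves implicit.
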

The author does not know whether the result stated above has some relation to the famous $e^{-1}$-law for the best choice problem, see, for instance, Bruss \cite{B84}. For $n_0$ design points we consider the design $d^*$ with the fractional part of about $e^{-1}$ design points as near as possible at $0$ (let $t^*_1$ be the largest of these design points) and the fractional part of about $1-e^{-1}$ design points as near as possible at $1$ (let $t^*_2$ be the smallest of these design points). Then, by Theorem \ref{theorem 5}, $d^*$ is asymptotically the uniformly best applicable design, if $t^*_1<t^*_2$ and $t^*_1\leq t_0$.

Finally, we consider the alternative
\begin{equation}
g(t)=c_0{\bf 1}_{[0,t_0]}(t)+(c_0+c_1t_0-c_1t){\bf 1}_{(t_0,1]}(t),\;t\in [0,1],
\label{altern straight-line}
\end{equation}
where $t_0\in[0,1)$ is a known or unknown constant and $c_0\in\R,c_1\in(0,\infty)$ are unknown constants. The last result follows in an analogous way as above.
\begin{theorem}
The asymptotic $0$-design which is uniformly distributed on $[t_0,1]$ is uniformly better with respect to the alternative (\ref{altern straight-line}) than an asymptotic $q$-design, $q\in (0,1)$, whose fractional part of design points on $[t_0,1]$ is uniformly distributed.
\end{theorem}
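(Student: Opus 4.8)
The plan is to follow the route of Theorem \ref{theorem 4}: I would first compute the deterministic drift $h$ that each of the two designs produces under the local alternative $\frac{1}{\sqrt{n-d}}g$ with $g$ as in $(\ref{altern straight-line})$, and then compare the two drifts pointwise, using that the power of the test of Theorem \ref{Kolmogorov test constant regr} is monotone in $h$. For the constant model the recursive residual is $e_{ni}=\sqrt{(i-1)/i}\,(Y_{ni}-\bar Y_{i-1})$, so under the local alternative its mean is $\frac{1}{\sqrt{n-d}}\sqrt{(i-1)/i}\,\big(g(t_{ni})-\frac{1}{i-1}\sum_{j<i}g(t_{nj})\big)$. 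Letting $n\to\infty$ exactly as in Theorem \ref{asympt constant regr}\,b), the partial sum process converges weakly to $h+B$ with
$$h(z)=\int_0^z\gamma(s)\,ds-\int_0^z\frac1s\int_0^s\gamma(u)\,du\,ds,\qquad z\in[0,1],$$
where $\gamma(s)=\lim_n g(t_{n,[ns]})$ is the limiting profile of $g$ read off along the design. As a first sanity check I would confirm that this reproduces Theorem \ref{asympt constant regr}\,b) for the uniform design and Theorem \ref{theorem 4} for the jump $(\ref{altern jump})$.

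Writing $K:=c_1(1-t_0)>0$, the two profiles are explicit. For the $0$-design uniform on $[t_0,1]$ one has $\gamma(s)=g(t_0+(1-t_0)s)=c_0-Ks$, and the display above collapses to the parabola $h_0(z)=-\tfrac{K}{4}z^2$. For the asymptotic $q$-design whose mass on $[t_0,1]$ is uniform, $\gamma\equiv c_0$ on $[0,q]$ and $\gamma(s)=c_0-K\frac{s-q}{1-q}$ on $(q,1]$; the flat stretch contributes nothing, so $h_q\equiv0$ on $[0,q]$ and $h_q(z)=-\frac{K}{2(1-q)}\big(\frac{z^2-q^2}{2}-q^2\ln\frac{z}{q}\big)$ on $(q,1]$.

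Since, in analogy with the power displayed after Theorem \ref{theorem 4}, the power here equals $P(\exists z\in[0,1]:B(z)+h(z)\le\Phi^{-1}(\alpha/2))$, a pointwise bound $h_0\le h_q$ (strict somewhere) forces the rejection event of the $0$-design to contain that of the $q$-design, which is exactly \emph{uniformly better} in the sense of $(\ref{power const-const comparision})$. On $[0,q]$ this is immediate because $h_0(z)=-Kz^2/4<0=h_q(z)$. On $(q,1]$, after cancelling the positive factor $K$, the inequality $h_0\le h_q$ reduces to the single-variable statement $z^2\le q\big(1+2\ln(z/q)\big)$.

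The hard part will be this last inequality. I would set $\phi(z)=q\big(1+2\ln(z/q)\big)-z^2$ and study it by calculus: $\phi(q)=q(1-q)>0$, while $\phi'(z)=2(q-z^2)/z$ shows $\phi$ increases on $(q,\sqrt{q})$ and decreases on $(\sqrt{q},1)$, so $\phi$ is unimodal and its minimum over $[q,1]$ sits at an endpoint. The left endpoint is harmless, so the whole comparison rests on the right-endpoint estimate $\phi(1)=q(1-2\ln q)-1\ge0$. Pinning down this endpoint condition — and, should it restrict the admissible range of $q$, recording that restriction — is the one genuinely delicate step; everything preceding it is the bookkeeping already carried out for Theorem \ref{theorem 4}.
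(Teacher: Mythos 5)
Your route is the natural one and, as far as one can tell, the paper's own: the paper offers no proof of this theorem beyond the remark that it ``follows in an analogous way as above'', i.e.\ by reparametrizing each design as a uniform design with a modified profile $\gamma$ and computing $h(z)=\int_0^z\gamma(t)\,dt-\int_0^z\frac{1}{s}\int_0^s\gamma(u)\,du\,ds$, exactly as you do. Your drifts are correct: $h_0(z)=-\frac{K}{4}z^2$ and $h_q(z)=-\frac{K}{2(1-q)}\bigl(\frac{z^2-q^2}{2}-q^2\ln\frac{z}{q}\bigr)$ on $(q,1]$, with $h_q\equiv 0$ on $[0,q]$; so is the reduction of $h_0\le h_q$ on $(q,1]$ to $z^2\le q\bigl(1+2\ln(z/q)\bigr)$ and the observation that $\phi$ is unimodal, so only the endpoints matter.

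The gap is that you stop at precisely the step that decides the matter, and that step fails. Put $\psi(q):=\phi(1)=q-2q\ln q-1$. Then $\psi(1)=0$ and $\psi'(q)=-1-2\ln q$ vanishes only at $q=e^{-1/2}$, so $\psi$ increases on $(0,e^{-1/2})$ and decreases on $(e^{-1/2},1)$, while $\psi(q)\to -1$ as $q\downarrow 0$. Hence $\psi$ has a unique zero $q^*\in(0,e^{-1/2})$, numerically $q^*\approx 0.285$, and $\psi<0$ on $(0,q^*)$. For such $q$ one has $h_0(1)>h_q(1)$; for instance $q=0.1$ gives $h_q(1)\approx -0.262\,K$ versus $h_0(1)=-0.25\,K$. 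So the $0$-design does \emph{not} dominate pointwise in the sense of (\ref{power const-const comparision}) for small $q$ (and since $h_0<0=h_q$ on $(0,q]$, neither design dominates the other there). Your argument therefore establishes the statement only for $q\in[q^*,1)$, not for all $q\in(0,1)$ as claimed; you must either record this restriction explicitly (a clean sufficient condition is $q\ge 0.29$, say) or find a genuinely different comparison, because the ``analogous'' computation itself rules out the unrestricted claim.
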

For an unknown change-point $t_0$ the above result is of theoretical interest only.
\section{Some Proofs}\label{Sect 4}

The following relation between an arbitrary design and a uniform design is crucial for the next proof. To this end let the alternative (\ref{altern jump}) and an arbitrary triangular array of design points $t_{n1}, \ldots, t_{nn}$ with $0 \leq t_{n1} \leq \ldots \leq t_{ns} \leq t_{0} < t_{ns+1} \leq \ldots \leq t_{nn} \leq 1$ be given. Moreover, let $q := s/n$. Then we have
$$
g_{t_0}(t_{ni}) = g_q \left( \frac{i-1}{n-1} \right), \; i = 1,\ldots,n.
$$
Thus instead to analyze the alternative $g_{t_0}$ and an arbitrary design with $s$ design points equal to or less than $t_0$ we can analyze the alternative $g_q$ with the change-point $q=s/n$ and a uniform design.


\begin{proof}[of Theorem \ref{theorem 4}]
By the above considerations the limit process of the recursive residual partial sum process with respect to the local alternative $\frac{1}{\sqrt{n-d}}g_q$ and a uniform design coincides with the limit process with respect to the local alternative $\frac{1}{\sqrt{n-d}}g_{t_0}$ and an asymptotic $q$-design. The trend $h$ given in Theorem \ref{asympt constant regr} part b) can be obtained for the local alternative $\frac{1}{\sqrt{n-d}}g_q$ and a uniform design after some calculations:
\begin{eqnarray*}
&h(z)=0, &\;z\in[0,q], \\
&h(z)=\int_{q}^z c_1 - \frac{1}{s} (qc_0+(s-q)c_1) ds
=q(c_1-c_0)(\ln(z)-\ln(q)), &\;z\in (q,1].
\end{eqnarray*}
\end{proof}

\begin{proof}[of Theorem \ref{theorem 5}]
For $z=1$ the expression $-q(\ln(z)-\ln(q)){\bf 1}_{(q,1]}(z)$ considered in (\ref{power const-const comparision}) takes on its minimum for $q=e^{-1}$ and, furthermore, it is strictly increasing on $[e^{-1},1)$.

Let $e^{-1}\leq q_1<q_2<1$. Then we have for all $z\in (q_2,1]$
$$
\frac{d}{dz}(-q_1(\ln(z)-\ln(q_1)))=-\frac{q_1}{z}>-\frac{q_2}{z}=\frac{d}{dz}(-q(\ln(z)-\ln(q))).
$$
This together with the first result of the proof provides the statement of Theorem  \ref{theorem 5}.
\end{proof}


\begin{thebibliography}{99}

\bibitem{B98} Bischoff, W.: A functional central limit theorem for regression models.
Ann. Statist. {\bf 26}, 1398--1410 (1998)

\bibitem{BG11} Bischoff, W., Gegg, A.: Partial sum process to check regression models with
multiple correlated response: with an application for testing a change point in profile
data. J. Multivariate Anal. {\bf 102}, 281--291 (2011)

\bibitem{BM00} Bischoff, W., Miller, F.:
Asymptotically optimal tests and optimal designs for testing the mean in
regression models with applications to change-point problems.
Ann. Inst. Statist. Math. {\bf 52}, 658--679 (2000)

\bibitem{BS09} Bischoff, W., Somayasa, W.:
The limit of the partial sums process of spatial least squares residuals.
J. Multivariate Anal. {\bf 100}, 2167--2177 (2009)

\bibitem{BDE75} Brown, R.L., Durbin, J., Evans, J.M.:
Techniques for testing the constancy of regression relationships over time.
J. Roy. Statist. Soc. Ser. B {\bf 37}, 149--192 (1975)

\bibitem{B84} Bruss, F.T.: A unified approach to a class of best choice problems with an unknown number of options. Ann. Probab. {\bf 12}, 882--889 (1984)

\bibitem{F78} Farebrother, R.W.: An historical note on recursive residuals. J. Roy. Statist. Soc. Ser. B {\bf 40}, 373--375 (1978)

\bibitem{G69} Gardner, L.A.:
On detecting changes in the mean of normal variates.
Ann. Math. Statist. {\bf 40}, 116--126 (1969)



\bibitem{MJ89} Jandhyala, V.K., MacNeill, I.B.:
Residual partial sum limit process for regression models with applications
to detecting parameter changes at unknown times. Stochastic Process. Appl. {\bf 33}, 309--323 (1989)

\bibitem{MJ91} Jandhyala, V.K., MacNeill, I.B.:
Tests for parameter changes at unknown times in linear regression models.
J. Statist. Plann. Inference {\bf 27}, 291--316 (1991)

\bibitem{MJ97} Jandhyala, V.K., MacNeill, I.B.:
Iterated partial sum sequences of regression residuals and tests for changepoints
with continuity constraints. J. Roy. Statist. Soc. Ser. B {\bf 59}, 147-156 (1997)

\bibitem{JZES99} Jandhyala, V.K., Zacks, S., El-Shaarawi, A.H.:
Change--point methods and their applications: Contributions of Ian MacNeill.
Environmetrics {\bf 10}, 657--676 (1999)

\bibitem{M78} MacNeill, I.B.:
Properties of sequences of partial sums of polynomial regression residuals with applications
to tests for change of regression at unknown times.
Ann. Statist. {\bf 6}, 422--433 (1978)

\bibitem{M78/2} MacNeill, I.B.:
Limit processes for sequences of partial sums of regression residuals.
Ann. Probab. {\bf 6}, 695--698 (1978)

\bibitem{R03} Rabovski, O.: Asymptotische Tests basierend auf rekursiven Residuen
von Regressionsmodellen, Diplomarbeit (2003)

\bibitem{S00} Shorack, G.R.: Probability for Statisticans. Springer, New York [u.a.] (2000)

\bibitem{S82} Sen, P.K.:
Invariance principles for recursive residuals.
Ann. Statist. {\bf 10}, 307--312 (1982)

\bibitem{SS75} Sen, A., Srivastava, M.S.:
On tests for detecting change in mean when variance is unknown.
Ann. Inst. Statist. Math. {\bf 27}, 479--486 (1975)

\bibitem{W95} Watson, G.S.:
Detecting a change in the itercept in multiple regression.
Statist. Probab. Lett. {\bf 23}, 69--72 (1995)

\bibitem{XM06} Xie, L., MacNeill, I.B.:
Spatial residual processes and boundary detection.
South African Statist. J. \textbf{40}, 33--53 (2006)


\end{thebibliography}
\end{document}